\newcommand\nthalias[1]{\AddToHook{env/#1/begin}{\crefalias{lemma}{#1}}}
\crefname{section}{Section}{Sections}
\crefname{subsection}{\S}{\S\S}
\crefname{subsubsection}{\S}{\S\S}
\theoremstyle{plain}
\newtheorem{lemma}{Lemma}[section]
\newtheorem{corollary}[lemma]{Corollary}
\newtheorem{theorem}[lemma]{Theorem}
\newtheorem{question}[lemma]{Question}
\theoremstyle{plain}
\theoremstyle{plain}
\newtheorem{example}[lemma]{Example}
\newtheorem{remark}[lemma]{Remark}
\newtheorem{notation}[lemma]{Notation}
\crefname{definition}{definition}{definitions}
\crefname{ex}{example}{examples}
\crefname{exs}{example}{examples}
\crefname{remark}{remark}{remarks}
\crefname{remarks}{remark}{remarks}
\crefname{convention}{convention}{conventions}
\crefname{notation}{notation}{notations}
\crefname{table}{table}{tables}
\crefname{lemma}{lemma}{lemmas}
\crefname{proposition}{proposition}{propositions}
\crefname{propositionN}{proposition}{propositions}
\crefname{corollary}{corollary}{corollaries}
\crefname{corollaryN}{corollary}{corollaries}
\crefname{theorem}{theorem}{theorems}
\crefname{theoremN}{theorem}{theorems}
\crefname{enumi}{}{}
\crefname{assumption}{assumption}{Assumptions}
\crefname{construction}{construction}{Constructions}
\crefname{question}{question}{Questions}
\crefname{equation}{}{}
\numberwithin{equation}{section}
\theoremstyle{nonumberplain}
\newtheorem{proof}{Proof}
\newcommand\pf[1]{\newtheorem{#1}{Proof of \Cref{#1}}}
\newcommand\bC{{\mathbb C}}
\newcommand\bG{{\mathbb G}}
\newcommand\bP{{\mathbb P}}
\newcommand\bR{{\mathbb R}}
\newcommand\bZ{{\mathbb Z}}
\newcommand\cB{{\mathcal B}}
\newcommand\cD{{\mathcal D}}
\newcommand\cL{{\mathcal L}}
\newcommand\cM{{\mathcal M}}
\newcommand\cP{{\mathcal P}}
\newcommand\cS{{\mathcal S}}
\newcommand\cX{{\mathcal X}}
\DeclareMathOperator{\Hom}{\mathrm{Hom}}
\DeclareMathOperator{\spn}{\mathrm{span}}
\newcommand{\qedhere}{\mbox{}\hfill\ensuremath{\blacksquare}}
\title{Fano schemes of sub-maximal elementary symmetric functions}
\author{Alexandru Chirvasitu}
\begin{document}

\date{}

\newcommand{\Addresses}{{
  \bigskip
  \footnotesize

  \textsc{Department of Mathematics, University at Buffalo}
  \par\nopagebreak
  \textsc{Buffalo, NY 14260-2900, USA}  
  \par\nopagebreak
  \textit{E-mail address}: \texttt{achirvas@buffalo.edu}


}}

\maketitle

\begin{abstract}
  Denote by $E_r$ the $r^{th}$ elementary symmetric polynomial in $\dim V$ variables for a vector space $V$ over an infinite field $\Bbbk$. We describe the rational points on the Fano scheme $F_{d-1}(Z(E_{\dim V-1}))$ of projective $(d-1)$-spaces contained in the zero locus of $E_{\dim V-1}$. Isolated points exist precisely for $\dim V=2d$, in which case they are in bijection with the $1\cdot 3\cdots (2d-1)$ pairings on a $2d$-element set. This, in particular, confirming a conjecture of Ambartsoumian, Auel and Jebelli to the effect that (over $\mathbb{R}$) all isolated points are recoverable via integral star transforms with appropriate symbols.
\end{abstract}

\noindent \emph{Key words:
  Fano scheme;
  Grassmannian;
  elementary symmetric;
  orbit Chern classes;
  permutation representation;
  polynomial invariant;
  symmetric polynomial;
  valuation
}

\vspace{.5cm}

\noindent{MSC 2020: 14J45; 05E05; 14M15; 20C15; 13A50; 13F30; 12J20; 14N05
  
  
}


\section*{Introduction}

The present note is motivated by a number of questions raised in \cite{2507.22138v1}, in the context of studying integral/differential operators on $\bR^n$ and their attendant geometry.

Write $\bP V$ for the projective space attached to a vector space $V$ (i.e. the set of lines in $V$) and, following \cite[\S 3.2]{3264}, 
\begin{equation*}
  \bG(k,\dim V-1)
  =
  \bG(k,\bP V)
  =
  G(k+1,V)
  =
  G(k+1,\dim V)
\end{equation*}
for the \emph{Grassmannian} of projective-dimension-$k$-planes in $\bP V$ (equivalently, linear $(k+1)$-dimensional subspaces of $V$). Recall also \cite[\S 6.1.1]{3264} the \emph{Fano schemes}
\begin{equation}\label{eq:fano}
  F_k(X)
  :=
  \left\{
    W\in \bG(k,\bP V)
    \ :\
    W\subseteq X
  \right\}
  \subseteq
  \bG(k,\bP V)=G(k+1,V)
\end{equation}
attached to closed subvarieties $X\subseteq \bP V$. Assume a basis $(e_i)$ for $V$ fixed, providing coordinate functions $x_i$ and hence zero loci
\begin{equation*}
  X_r=X_{r,\dim V}:=Z(E_r)\subseteq \bP V
  ,\quad
  E_r:=\text{$r^{th}$ \emph{elementary symmetric polynomial} \cite[\S A.1]{fh_rep-th}}.
\end{equation*}
Working over the reals, \cite[Theorem 5.1]{2507.22138v1} constructs points of $F_{\bullet}(X_{m-r,m})$ in the following PDE-motivated fashion.

\begin{itemize}[wide]
\item Write $\cX_u$, $u\in \bR^n$ for the \emph{divergent beam transform} \cite[Definition 1]{MR4310170}
  \begin{equation*}
    f
    \xmapsto{\quad \cX_u\quad}
    \int_{-\infty}^0 f(x+tu)\ \mathrm{d}t
  \end{equation*}
  on compactly-supported (smooth, say) on $\bR^n$. 

\item Define the \emph{star transform} \cite[Definition 2]{MR4310170} $\cS=\cS_{E_r,(u_i)_i}$ attached to $E_r$ and an $m$-tuple $(u_i)_{i=1}^m\subset \bR^n$ as
  \begin{equation}\label{eq:s.trnsf.er}
    f
    \xmapsto{\quad \cS\quad}
   E_r\left(\cX_{u_1},\ \cdots,\ \cX_{u_m}\right) f.
  \end{equation}

\item The range of
  \begin{equation}\label{eq:u.as.op}
    \left(\bR^n\right)^*
    \xrightarrow{\quad (u_i)_{i=1}^m\quad}
    \bR^m
  \end{equation}
  (i.e. regarding the $u_i$ as rows of an $m\times n$ matrix) is then shown in \cite[Theorem 5.1]{2507.22138v1} to constitute a point of $F_{\dim\spn\left\{u_i\right\}-1}(X_{m-r,m})\subseteq \bG(\dim\spn\left\{u_i\right\}-1, \bP \left(\bR^m\right))$ whenever the transform $\cS$ in question is non-invertible.

\item \cite[Proposition 5.3]{2507.22138v1} moreover specializes that discussion to provide $1\cdot 3\cdots (2d-1)$ isolated (real) points on $F_{d-1}(X_{2d-1,2d})$.
\end{itemize}

It is in that context that it becomes natural to conjecture the list of isolated $F_{d-1}(X_{2d-1,2d})$-points in \cite[Proposition 5.3(1)]{2507.22138v1} exhaustive. \Cref{th:stt.cj} confirms that conjecture by describing the ($\Bbbk$-rational points on the) Fano schemes $F_{d-1}(X_{m-1,m})$ attached to immediately-sub-maximal elementary symmetric polynomials over arbitrary infinite fields. Some notation will help streamline the statement.

\begin{notation}\label{not:part.eq}
  \begin{enumerate}[(1),wide]

  \item For $m\in \bZ_{\ge 0}$ the symbol $\cP^{\text{condition}}_{[m]}$ denotes the set of partitions
    \begin{equation*}
      \pi\quad:\quad
      [m]:=\left\{1\cdots m\right\}
      =
      \bigsqcup_{1}^k
      \pi^i
      ,\quad
      \pi^i\subseteq [m]
    \end{equation*}
    whose part sizes $\left|\pi^i\right|$ satisfy the condition. Examples include $\cP^{\ge d}_{[m]}$ (set of partitions into cardinality-$(\ge d)$ parts), $\cP^{\text{even}}_{[m]}$ (that of partitions into even parts), plain $\cP_{[m]}$ (no constraints at all), etc.
    
    The number writing $\sharp \pi$ for the number $k$ of parts of $\pi=\left(\pi^i\right)_{i=1}^k\in \cP_{[m]}$.
    
  \item Having fixed a basis $(e_{\ell})_{\ell=1}^{\dim V}$ for a $\Bbbk$-vector space, a scalar tuple $\mathbf{c}\in \Bbbk^{\dim V}$ and a partition $\pi\in \cP^{\ge 1}_{[\dim V]}$ we write
    \begin{equation*}
      V_{\pi,\mathbf{c}}
      :=
      \left\{
        \sum_{i=1}^{\sharp\pi}t_i \left(\sum_{j\in \pi^i}c_j e_j\right)
        \ :\
        t_i\in \Bbbk
      \right\}
      \le
      V
    \end{equation*}
    (a $\sharp\pi$-dimensional subspace of $V$). 
  \end{enumerate}
\end{notation}

\begin{theorem}\label{th:stt.cj}
  Consider a vector space $V$ of dimension $m\in \bZ_{>0}$ over an infinite field $\Bbbk$ with a fixed basis $(e_j)_{j=1}^m$, setting
  \begin{equation*}
    \begin{aligned}
      X=X_{m-1}
      &:=
        \left\{
        x\in \bP V
        \ :\ 
        f(x)=0
        \right\}\\
      f=E_{m-1}
      &:=
        \text{$(m-1)^{st}$ elementary symmetric function in the $m$ coordinates attached to $(e_j)$}.
    \end{aligned}    
  \end{equation*}

  \begin{enumerate}[(1),wide]
  \item\label{item:th:stt.cj:gen} The $\Bbbk$-points of the Fano scheme $F_{d-1}(X)$, $d\in \bZ_{>0}$ are precisely
    \begin{enumerate}[(a),wide]
    \item\label{item:th:stt.cj:nis} the elements of the union
      \begin{equation*}
        \bigcup_{W} \bG(d-1,\bP W)
      \end{equation*}
      for $W\le V$ ranging over the $(m-2)$-dimensional zero sets of the $\tbinom{m}{2}$ pairs of coordinates;
      
    \item\label{item:th:stt.cj:is} and those of the form
      \begin{equation}\label{eq:pvpc}
        \bP V_{\pi,\mathbf{c}}\le \bP V
        \quad\text{for}\quad
        \left\{
          \begin{aligned}
            \pi
            \in
            \cP^{\ge 1}_{[m]}
            \quad&\text{and}\quad
                   \mathbf{c}
                   \in \left(\Bbbk^{\times}\right)^m\\
            \forall \left(1\le i\le \sharp \pi=d\right)
            \quad&:\quad
                   \sum_{j\in \pi^i}\frac 1{c_j}=0
          \end{aligned}
        \right.
      \end{equation}
    \end{enumerate}

  \item\label{item:th:stt.cj:d.lg} In particular, \Cref{item:th:stt.cj:nis} exhausts the possibilities if $2d>m$.

  \item\label{item:th:stt.cj:isol} Isolated points exist only for $2d=m$, in which case they are the projectivizations of the $d$-planes in $\bC^{2d}$ defined by $s$ equations
      \begin{equation*}
        \sum_{j\in p_{\alpha}}x_j=0
        ,\quad
        1\le \alpha\le d,
      \end{equation*}
      for $(p_{\alpha})_{\alpha}$ ranging over the partitions of $[2d]:=\left\{1\cdots 2d\right\}$ into $d$ pairs.         
  \end{enumerate}
\end{theorem}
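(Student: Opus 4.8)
The plan is to detect isolated points through the Zariski tangent space, reading ``isolated'' as ``reduced isolated'', i.e.\ $T_{[W]}F_{d-1}(X)=0$ (so that the local ring is $\Bbbk$). For a hypersurface $X=Z(f)\subseteq\bP V$ and $W\le V$ of dimension $d$ with $[W]\in F_{d-1}(X)$, I will use the standard identification $T_{[W]}\bG(d-1,\bP V)=\Hom(W,V/W)$ together with
\[
  T_{[W]}F_{d-1}(X)
  =
  \Bigl\{
    \phi\in\Hom(W,V/W)\ :\
    \textstyle\sum_{j}\partial_j f(w)\,\widetilde\phi(w)_j=0\ \ \forall w\in W
  \Bigr\},
\]
for $\widetilde\phi\in\Hom(W,V)$ any lift of $\phi$; the expression is lift-independent because $f|_W\equiv 0$ forces $df_w$ to annihilate $W$. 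The one genuinely useful computation is a closed form for the gradient on $X$: writing $P=\prod_\ell x_\ell$ and exploiting $E_{m-1}=P\sum_\ell 1/x_\ell$ together with $\sum_\ell 1/x_\ell=0$ on the dense open locus of $X$ where all coordinates are nonzero, I obtain $\partial_j E_{m-1}=-P/x_j^2$ there. Since every $\Bbbk$-point of $F_{d-1}(X)$ is of type (a) or (b) by part (1), it suffices to run this tangent computation on those two families.

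For a plane $W\le Z(x_i,x_j)$ of type (a), the two vanishing coordinates kill all partials except $\partial_i f|_W=\partial_j f|_W=\prod_{\ell\neq i,j}x_\ell=:Q$, so the tangent condition degenerates to $Q\cdot(\widetilde\phi(w)_i+\widetilde\phi(w)_j)=0$ on $W$. The functional $w\mapsto\widetilde\phi(w)_i+\widetilde\phi(w)_j$ descends to $\Hom(W,V/W)$ (since $W\subseteq Z(x_i,x_j)$, modifying the lift by an element of $\Hom(W,W)$ alters neither the $i$-th nor the $j$-th coordinate) and lands in $W^*$, so whether or not $Q\equiv 0$ on $W$ one gets $\dim T_{[W]}F_{d-1}(X)\ge d(m-d)-d=d(m-d-1)>0$ because $d\le m-2$. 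Hence no point of type (a) is ever a reduced isolated point; this is exactly why isolated points cannot occur once $2d>m$, and it leaves the type-(b) planes as the only candidates.

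For $W=V_{\pi,\mathbf c}$ I parametrize $w=\sum_i s_i v_i$ with $v_i=\sum_{j\in\pi^i}c_je_j$, so that $x_j(w)=s_{i(j)}c_j$. The gradient formula rewrites the tangent condition as $\sum_i M_i(s)/s_i^2\equiv 0$, where $M_i(s)=\sum_{\ell\in\pi^i}\widetilde\phi(w)_\ell/c_\ell^2$ is linear in $s$; clearing denominators gives $\sum_i M_i(s)\prod_{k\neq i}s_k^2\equiv 0$, and because the monomials $s_n\prod_{k\neq i}s_k^2$ are pairwise distinct as $(i,n)$ ranges, every $M_i$ must vanish identically. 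Writing $\widetilde\phi(v_n)=\sum_\ell\Phi_{\ell,n}e_\ell$, this reads $\sum_{\ell\in\pi^i}\Phi_{\ell,n}/c_\ell^2=0$ for each part $i$ and index $n$. The identity $\sum_{\ell\in\pi^i}1/c_\ell=0$ ensures these conditions descend to the class $\overline{\widetilde\phi(v_n)}\in V/W$, yielding for each $n$ a single linear equation per part, i.e.\ a map $V/W\to\Bbbk^{\sharp\pi}=\Bbbk^d$ whose kernel has dimension at least $\dim(V/W)-d=m-2d$. Thus whenever $m>2d$ (equivalently, whenever some part has size $\ge 3$) the tangent space is nonzero and the plane is non-isolated.

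It remains to treat $m=2d$, where the constraints force every part to be a pair $\{a,b\}$ with $c_b=-c_a$; then $V_{\pi,\mathbf c}=\spn\{e_a-e_b\}$ is independent of $\mathbf c$ and is cut out by the equations $\{x_a+x_b=0\}$, recovering the stated pairing description, while the per-part equation becomes precisely $\Phi_{a,n}+\Phi_{b,n}=0$, i.e.\ $\overline{\widetilde\phi(v_n)}=0$ in $V/W$. Hence $T_{[W]}F_{d-1}(X)=0$, these $1\cdot 3\cdots(2d-1)$ planes are reduced isolated points, and distinct pairings visibly give distinct planes, yielding the asserted bijection. The hard part is the gradient identity together with the bookkeeping that turns the single surviving linear condition per part into the subspace-membership criterion for $V/W$ — the crux being that ``one equation per part'' matches ``membership in $W$'' exactly when the parts are pairs. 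A secondary subtlety worth flagging is the gap between reduced and set-theoretic isolation: for $m=2d$ with $d=2$ (and likewise for the type-(a) planes with $d=m-2$) the computation gives $T\neq 0$, so such a plane, though possibly set-theoretically isolated, is a non-reduced point and is correctly excluded from the genuine isolated points recorded in the statement.
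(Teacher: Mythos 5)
Your proposal has a genuine and central gap: it never proves part \ref{item:th:stt.cj:gen}, which is the main assertion of the theorem and the part to which the paper devotes essentially all of its effort. You write ``Since every $\Bbbk$-point of $F_{d-1}(X)$ is of type (a) or (b) by part (1), it suffices to run this tangent computation on those two families'' --- but part (1) is not a given; it is the statement to be proven. The paper establishes it by covering the Grassmannian with the standard affine charts (its \Cref{le:cvr.grsm}), writing a candidate $d$-plane as the row span of $(I_d \mid A)$, observing that membership in $F_{d-1}(X)$ for a plane with no zero column is equivalent to the rational-function identity $\sum_{j=1}^m 1/(t_{1j}s_1+\cdots+t_{dj}s_d)=0$, and then invoking the linear independence of reciprocals of pairwise non-proportional linear forms (its \Cref{le:lin.ind.recs}, proved with a valuation argument) to force the columns to group into the partition structure of \Cref{eq:pvpc}. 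None of this appears in your write-up, and without it the classification into types (a) and (b) --- hence also part \ref{item:th:stt.cj:d.lg} --- is unsupported.

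For part \ref{item:th:stt.cj:isol} your route is genuinely different from the paper's: you compute Zariski tangent spaces via the gradient identity $\partial_j E_{m-1}=-P/x_j^2$ on the open locus of nonvanishing coordinates, whereas the paper simply observes set-theoretically that $V_{\pi,\mathbf{c}}$ admits no deformation in $\mathbf{c}$ (up to per-part scaling) exactly when every part is a pair. Your computation of $T_{[W]}F_{d-1}(X)=0$ at the pairing planes is correct as far as I can check and is a nice strengthening (it shows these are \emph{reduced} isolated points, which the paper does not claim). However, your closing caveat is internally inconsistent: your own calculation shows $M_i\equiv 0$ forces $\Phi_{a,n}+\Phi_{b,n}=0$ for every pair, hence $T=0$, for \emph{every} $d\ge 1$ with $m=2d$; there is nothing special about $d=2$, and asserting that the $d=2$ pairing points are ``correctly excluded'' contradicts the statement, which counts $1\cdot 3=3$ isolated points in that case. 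You should either delete that remark or substantiate it; as written it casts doubt on a case the theorem explicitly covers. In summary: the tangent-space analysis is a worthwhile complement, but the proposal cannot stand as a proof of the theorem without an argument for part \ref{item:th:stt.cj:gen}.
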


We also record the following immediate consequence (verifying the conjectural description of the components of $F_{d-1}\left(X_{2d-1}\right)$ mentioned immediately following \cite[Proposition 5.3]{2507.22138v1}).

\begin{corollary}\label{cor:cmpnnt}
  In the context of \Cref{th:stt.cj}, working over an algebraically-closed field so as to identify varieties with their point-sets, the components of $F_{d-1}(X)$ are
  \begin{itemize}[wide]
  \item those of type \Cref{item:th:stt.cj:nis}: the Grassmannians $\bG(d-1,\bP W)$, one for each of the $\tbinom{m}{2}$ $(m-2)$-dimensional $W\le V$ obtained by annihilating pairs of coordinates;
    
  \item and those of type \Cref{item:th:stt.cj:is}, indexed by partitions: $\left\{\bP V_{\pi,\mathbf{c}}\right\}_{\mathbf{c}}$ for $\pi\in\cP^{\ge 1}_{[m]}$ with $d$ parts.  \qedhere
  \end{itemize}
\end{corollary}

\Cref{se:pol.inv} branches out to address a problem posed in \cite[\S 4]{2507.22138v1}, relating to star transforms \Cref{eq:s.trnsf.er} exhibiting symmetries under a linear action of a finite group $\bG$. Specifically:
\begin{itemize}[wide]
\item Define the differential operator
  \begin{equation*}
    f
    \xmapsto{\quad \cL=\cL_{E_r,(u_i)_i}\quad}
    E_{m-r}\left(\cD_{u_1},\ \cdots,\ \cD_{u_m}\right) f
  \end{equation*}
  \emph{dual} \cite[Definition 2.2]{2507.22138v1} to \Cref{eq:s.trnsf.er}, with the $\cD_{\bullet}$ denoting partial differential operators in the indicated directions.

\item $\cL$ has an associated \emph{symbol} (\cite[(5.44)]{grb_pdo}, \cite[Definition 3.3.13]{nar_real-cplx_2e_1985}) $\sigma_{\cL}(\xi)$, regarded as a polynomial on the same dual space $\left(\bR^n\right)^*$ displayed in \Cref{eq:u.as.op}.

\item Assuming \Cref{eq:u.as.op} to be a $\bG$-equivariant map for a representation of the finite group $\bG$ and a permutation representation carried by the codomain $\bR^m$, \cite[Theorem 3.2]{2507.22138v1} proves the symbol $\bG$-invariant.

\item Whereupon \cite[\S 4, Question]{2507.22138v1} asks (in one possible interpretation) whether the algebra of such $\bG$-invariant polynomials exhausts $\bR[\xi_i]^{\bG}$. 
\end{itemize}

\Cref{th:orbs.suff} below answers this in the affirmative, as a simple consequence of work constructing sufficiently many polynomial $\bG$-invariants via representation-theoretic analogues \cite[\S 2]{zbMATH01097431} of \emph{Chern classes} \cite[\S 14]{ms-cc}.  

\subsection*{Acknowledgments}

I am grateful for illuminating suggestions, tips and comments from G. Ambartsoumian, M. Fulger, M. J. Latifi and C. Raicu. 


\section{Fano schemes of almost-top elementary symmetric functions}\label{se:fano}

Recall the covering
\begin{equation*}
  \begin{aligned}
    G(k+1,V)
    &=
      \bigcup_{\substack{L\le V\\\dim L=\dim V-(K+1)}} U_L\\
    U_L
    &:=
      \left\{
      W\in G(k+1,V)
      \ :\
      W\cap L=\{0\}
      \right\}
  \end{aligned}  
\end{equation*}
of \cite[\S 3.2.2]{3264} by affine open patches: having fixed a decomposition $V=L'\oplus L$, we have an identification
\begin{equation*}
  \Hom(L',L)\ni T
  \xmapsto[\quad\cong\quad]{\quad}
  \left(
    \text{graph of $T$}
    \le
    L'\oplus L=V
  \right)
  \in U_L.
\end{equation*}

Consider a closed subvariety $X\subseteq \bP V$. As \cite[\S 6.1.1]{3264} makes clear, it is especially convenient, having fixed a basis $\cB=(e_i)\subset V$, to describe the Fano schemes \Cref{eq:fano} on open patches $U_L$ for $L$ spanned by subsets of $\cB$. The following simple remark will be of some help in that respect.

\begin{lemma}\label{le:cvr.grsm}
  Let $\cB=(e_i)_{i\in I}\subset V$ be a basis of a finite-dimensional vector space and set
  \begin{equation*}
    \forall\left(S\subseteq I\right)
    \quad:\quad
    U_S:=U_{L_S}
    \quad\text{and}\quad
    L_S:=\spn\left\{e_i\right\}_{i\in S}.
  \end{equation*}
  For $k\in \bZ_{\ge 0}$ we have
  \begin{equation*}
    \bG(k,\bP V)
    =
    \bigcup_{\substack{S\subseteq I\\|S|=\dim V-(k+1)}}U_{S}
  \end{equation*}
\end{lemma}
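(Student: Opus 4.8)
The inclusion $\supseteq$ is immediate from the definitions, since each $U_S$ is by construction a subset of $G(\dim V-|S|,V)=G(k+1,V)=\bG(k,\bP V)$. So the only content is the reverse inclusion, which asserts that every $(k+1)$-dimensional subspace $W\le V$ admits a \emph{coordinate} complement $L_S$ with $|S|=\dim V-(k+1)$; this is nothing but the Steinitz exchange property of the fixed basis $\cB$, and the plan is simply to spell it out via the quotient $V/W$.

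First I would fix $W\in\bG(k,\bP V)$ and consider the quotient map $q\colon V\to V/W$, where $\dim V/W=\dim V-(k+1)$. Because $\cB$ spans $V$, the family $\{q(e_i)\}_{i\in I}$ spans $V/W$, so I can extract from it a basis indexed by some $S\subseteq I$. Automatically $|S|=\dim V/W=\dim V-(k+1)$, which matches the cardinality appearing in the union.

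It then remains to verify $W\cap L_S=\{0\}$, so that $W\in U_S$. For this I would take any $v=\sum_{i\in S}a_ie_i\in W\cap L_S$, apply $q$ to obtain $\sum_{i\in S}a_i\,q(e_i)=0$ in $V/W$, and invoke the linear independence of $\{q(e_i)\}_{i\in S}$ to conclude $a_i=0$ for all $i$, whence $v=0$. This yields $V=W\oplus L_S$ and places $W$ in $U_S$, completing the covering.

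I do not anticipate any genuine obstacle: the statement is elementary linear algebra, and the single point deserving attention is that passing to $V/W$ is precisely what guarantees the extracted index set $S$ has exactly the prescribed size $\dim V-(k+1)$, rather than merely being bounded by it.
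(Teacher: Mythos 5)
Your proof is correct, but it takes a genuinely different route from the paper's. You argue directly: pass to the quotient $V/W$, extract from the spanning family $\{q(e_i)\}_{i\in I}$ a basis indexed by some $S$ (which automatically has the right cardinality $\dim V-(k+1)$), and then check $W\cap L_S=\{0\}$ by pushing a putative intersection vector through $q$. The paper instead runs a descending induction on a contradiction hypothesis: it assumes $W$ meets \emph{every} $L_S$ with $|S|=\dim V-(k+1)$ nontrivially, and shows that if $W$ meets every $L_S$ with $|S|=d$ then it meets every $L_S$ with $|S|=d-1$ --- the key step being that a $W$ avoiding a $(d-1)$-element $L_S$ would have to contain vectors in each $L_{S\sqcup\{i\}}\setminus L_S$, which span too large a subspace --- until the absurd conclusion $W\cap\{0\}\ne\{0\}$ at $d=0$. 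Both arguments are elementary and ultimately rest on the exchange property of the basis $\cB$; yours is the more direct and arguably cleaner of the two, since the quotient construction hands you the index set $S$ of exactly the prescribed size in one step, whereas the paper's induction has to manufacture the dimension count $|I\setminus S|\ge k+2$ at each stage. There is no gap in your argument.
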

\begin{proof}
  Fix a $(k+1)$-dimensional subspace $W\le V$, and consider the sentence
  \begin{equation*}
    P_d
    \quad:\quad
    \forall\left(S\subseteq I,\ |S|=d\right)
    \left(W\cap L_S\ne \{0\}\right).
  \end{equation*}
  Assume $P_d$ valid for some $d\le \dim V-(k+1)$; we will argue that $P_{d-1}$ must then be valid as well, achieving a contradiction at $P_0$. 

  Let $S\subseteq I$ be a $(d-1)$-element set. If $W$ avoids $L_S$, then it must contain vectors in all
  \begin{equation*}
    L_{S\sqcup\{i\}}\setminus L_S
    ,\quad
    i\in I\setminus S.
  \end{equation*}
  Such vectors will span a subspace of $W$ of dimension
  \begin{equation*}
    |I\setminus S|
    =
    \dim V-d+1
    \ge
    k+2
    >
    k+1
    =
    \dim W.
  \end{equation*}
  This being absurd, we have shown that $W\cap L_S\ne \{0\}$ and hence verified $P_{d-1}$ (for the cardinality-$(d-1)$ subset $S\subseteq I$ was arbitrary). 
\end{proof}

\pf{th:stt.cj}
\begin{th:stt.cj}
  Before settling into proving \Cref{item:th:stt.cj:gen}, observe that the other claims are indeed simple consequences:
  \begin{itemize}[wide]
  \item The conditions of \Cref{eq:pvpc} in fact require that $\pi\in \cP^{\ge 2}_{[m]}$ and $\sharp\pi=d$, and if $2d>m$ then no such partition exists ($d$ parts, all of size $\ge 2$). This settles \Cref{item:th:stt.cj:d.lg}.

  \item As for \Cref{item:th:stt.cj:isol}, note that a point $V_{\pi\mathbf{c}}$ is isolated precisely when $\pi\in \cP^{2}_{[m]}$ (all parts are of size precisely 2), so as to afford no choice among the $c_i$ (up to simultaneous scaling across every part $\pi^i$). This forces $2d=m$, and the statement's description of the points is what \Cref{eq:pvpc} specializes to.
  \end{itemize}
  We henceforth focus on \Cref{item:th:stt.cj:gen} for the duration of the proof. \Cref{le:cvr.grsm} localizes the problem: it will suffice to describe $F_{d-1}(X)\cap U$ for
  \begin{equation*}
    U:=
    \left\{
      \bP W
      \in
      \bG(d-1,\bP V)
      \ :\
      W\cap L=\{0\}
    \right\}
    \subseteq
    \bG(d-1,\bP V)
  \end{equation*}
  for $(m-d)$-dimensional $L\le V$ obtained by annihilating $d$ of the $m$ coordinates $x_i$ fixed throughout. By permutation-invariance, moreover, we may as well take $L=\left\{x_i=0,\ 1\le i\le d\right\}$. Per \cite[\S 6.1.1]{3264}, $F_{d-1}(X)\cap U\subseteq U$ is described as a zero locus as follows\footnote{The matrix equation display \cite[\S 6.1.1]{3264} seems to be marred by a small typo: the last column should consist of entries $a_{\bullet,n}$ rather than $a_{\bullet,n+1}$.}:
  \begin{itemize}[wide]
  \item identify $U$ with the space of $d\times m$ matrices
    \begin{equation}\label{eq:ta}
      \begin{pmatrix}
        T:=
        I_d&\mid&A
      \end{pmatrix}
      =
      (t_{ij})_{i,j}
      \in M_{d,m}
      ,\quad
      A=(a_{ij})_{i,j}
      \in M_{d,m-d};
    \end{equation}

  \item with $f$ as in the statement (the $(m-1)^{st}$ elementary symmetric function), expand $f\left((s_i)\cdot T\right)=0$, the evaluation of $f$ on the $m$ entries of the row $(s_i)\cdot T$, as a polynomial in the indeterminates $s_i$, $1\le i\le d$. 
    
  \item the polynomials in the entries of $A$ appearing as coefficients in that expansion define $F_{d-1}(X)\cap U$. 
  \end{itemize}
  In this setup, the claim is that the elements of that zero locus are precisely those \Cref{eq:ta} of either of the two following types:
  \begin{enumerate}[(a),wide]
  \item some two columns of $T$ (hence $A$) vanishing identically (equivalent to at least \emph{one} column vanishing identically);

  \item\label{item:th:stt.cj:pf.nz} or, for some partition $\pi\in \cM^{\ge 1}_{[m]}$ of the column index set, we have 
    \begin{equation*}
      \forall \left(1\le i\le \sharp \pi\right)
      \exists\left(v_i\in \Bbbk^d\right)
      \exists\left(
        \mathbf{c}
        \in
        \left(\Bbbk^{\times}\right)^m\text{ as in \Cref{eq:pvpc}}
      \right)
      \quad:\quad
      \left(j\in \pi^i\xRightarrow{\quad}T_{\bullet j}=c_j v_i\right).
    \end{equation*}
  \end{enumerate}
  It will thus suffice to assume no zero columns and prove that \Cref{item:th:stt.cj:pf.nz} obtains. To that end, observe that the non-zero-columns assumption means that the linear combinations
  \begin{equation*}
    a_{1j}s_1+\cdots + a_{dj}s_d
    ,\quad
    1\le j\le m-d
  \end{equation*}
  are non-trivial linear maps in $s_i$, which by the infinitude of $\Bbbk$ we can take as indeterminates. The hypothesis (that \Cref{eq:ta} belongs to $F_{d-1}(X)\cap U$) can then be recast as the rational-function identity
  \begin{equation}\label{eq:rat.id}
    \sum_{j=1}^{m} \frac 1{t_{1j}s_1+\cdots + t_{dj}s_d}
    \left(
      =
      \sum_{j=1}^{m} \frac 1{\text{$j^{th}$ entry of $(s_i)\cdot T$}}
    \right)   
    \quad
    =
    \quad    
    0.
  \end{equation}
  The reciprocals of degree-1 homogeneous polynomials which are mutual non-scalar-multiples are linearly independent by \Cref{le:lin.ind.recs}, meaning that the left-hand side of \Cref{eq:rat.id} reads
  \begin{equation*}
    \sum_{i=1}^{\sharp \pi}\sum_{j\in \pi^i}\frac 1{c_j f_i(s_{\bullet})}
    \quad\text{for}\quad
    \pi\in \cP^{\ge 1}_{[m]}
    ,\ c_j\in \Bbbk^{\times}
  \end{equation*}
  and $\sharp \pi$ distinct linear forms $f_i$, and the selfsame \Cref{le:lin.ind.recs} also provides the constraint that all $\sum_{j\in \pi^i}\frac 1{c_j}$ vanish (for varying $i$). 
\end{th:stt.cj}

\begin{remark}\label{re:not.exp.dim}
  As \Cref{th:stt.cj} makes clear, the specific Fano schemes it is concerned with are rather far from exhibiting ``expected behavior'': the naive dimension count \cite[Proposition 6.4]{3264} for Fano schemes $F_{d-1}(X)$ with $X\subseteq \bP V$ cut out by a degree-$(m-1)$ polynomial yields
  \begin{align*}
    \dim F_{d-1}(X)
    &=
      \dim \bG(d-1,\bP V)-\tbinom{d+m-2}{d-1}\\
    &=
      d\left(\dim V-d\right)-\tbinom{d+m-2}{d-1}
      \xlongequal[\quad]{\ \dim V=m\ }
      d(m-d)-\tbinom{d+m-2}{d-1},
  \end{align*}
  which may well be negative. This is certainly so if $d\ge 3$ in the case $m=2d$ of interest in the original problem posed in \cite[\S 5]{2507.22138v1}.
\end{remark}

The following simple auxiliary observation is presumably self-evident; the proof being short, we include it for completeness. 

\begin{lemma}\label{le:lin.ind.recs}
  Let $f_i=f_i(s_j,\ 1\le j\le d)$ be non-zero linear forms in $d$ variables over a field $\Bbbk$ spanning distinct lines in $\Bbbk[s_j]$.

  The reciprocals $\frac 1{f_i}$ are linearly independent over $\Bbbk$. 
\end{lemma}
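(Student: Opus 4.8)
The plan is to clear denominators and then appeal to unique factorization in the polynomial ring $\Bbbk[s_1,\dots,s_d]$. Write $f_1,\dots,f_n$ for the forms in question and suppose $\sum_{i=1}^n \lambda_i/f_i=0$ in the fraction field $\Bbbk(s_1,\dots,s_d)$ for some scalars $\lambda_i\in\Bbbk$; the goal is to show that every $\lambda_i$ vanishes. Multiplying through by $\prod_{j=1}^n f_j$ (legitimate, since each $f_j\ne 0$) recasts the hypothesis as the polynomial identity
\begin{equation*}
  \sum_{i=1}^n \lambda_i \prod_{j\ne i} f_j = 0
\end{equation*}
in $\Bbbk[s_1,\dots,s_d]$.

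Next I would isolate a single putative nonzero coefficient. Fix an index $i_0$ and reduce the identity modulo $f_{i_0}$: every summand with $i\ne i_0$ carries $f_{i_0}$ as one of its factors and hence vanishes, leaving the divisibility $f_{i_0}\mid \lambda_{i_0}\prod_{j\ne i_0}f_j$. The two structural inputs now are that each $f_j$ is \emph{irreducible}, indeed prime, in $\Bbbk[s_j]$ — being a nonzero homogeneous element of degree $1$ over a field, it admits no nontrivial factorization by additivity of degrees — and that, by the ``distinct lines'' hypothesis, no two of the $f_j$ are scalar multiples of one another, equivalently no two are associates. Consequently $f_{i_0}$ divides none of the forms $f_j$ with $j\ne i_0$, so as a prime it does not divide $\prod_{j\ne i_0}f_j$; combined with $f_{i_0}\mid \lambda_{i_0}\prod_{j\ne i_0}f_j$ this forces $f_{i_0}\mid\lambda_{i_0}$, and since $\lambda_{i_0}$ is a scalar (a unit when nonzero) we conclude $\lambda_{i_0}=0$. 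As $i_0$ was arbitrary, the reciprocals are linearly independent.

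I do not anticipate a genuine obstacle here; the only points deserving care are the equivalences underlying the ``distinct lines'' phrasing, namely that for homogeneous degree-$1$ forms pairwise non-proportionality coincides with being pairwise non-associate, and that a nonzero linear form is irreducible — both immediate in a polynomial ring over a field. If one prefers to avoid invoking factoriality explicitly, the same conclusion follows by a specialization argument, evaluating the $s_j$ at a point lying on the hyperplane $f_{i_0}=0$ but off the finitely many hyperplanes $f_j=0$ with $j\ne i_0$; this requires $\Bbbk$ infinite, however, whereas the unique-factorization formulation above is cleaner and works over an arbitrary field.
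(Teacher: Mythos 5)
Your proof is correct, and it takes a genuinely different route from the paper's. You clear denominators and argue via unique factorization: reducing $\sum_i\lambda_i\prod_{j\ne i}f_j=0$ modulo $f_{i_0}$ yields $f_{i_0}\mid\lambda_{i_0}\prod_{j\ne i_0}f_j$, and since a non-zero linear form is prime in $\Bbbk[s_1,\dots,s_d]$ and the ``distinct lines'' hypothesis rules out $f_{i_0}$ being an associate of any other $f_j$, the scalar $\lambda_{i_0}$ must vanish. The paper instead first passes to an infinite extension of $\Bbbk$, specializes all variables but one so as to reduce to reciprocals $1/(s+a_\ell)$ with distinct $a_\ell$, and then derives a contradiction by applying the valuation attached to the discrete valuation ring $\Bbbk[s]_{(s+a_\ell)}$ (a nonzero term would have valuation $-1$ while everything else has valuation $\ge 0$). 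Your argument has the advantage of working verbatim over an arbitrary field with no field extension and no genericity-of-specialization bookkeeping (the paper's evaluation step must implicitly avoid collapsing distinct forms onto one line or killing a denominator); the paper's argument buys a reduction to a one-variable partial-fractions picture at the cost of those caveats. The only cosmetic slip in your write-up is writing ``irreducible in $\Bbbk[s_j]$'' where you mean the full polynomial ring $\Bbbk[s_1,\dots,s_d]$, though this echoes the paper's own shorthand.
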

\begin{proof}
  We assume $\Bbbk$ infinite (for we can always pass to an extension), evaluating the indeterminates $s_j$ thereon. We may as well assume at least one $f_i$ involves $s:=s_1$. Evaluating all other $s_j$, $j\ne 1$ so as to ensure non-zero denominators and rescaling those $f_i$ which depend on $s_1$ so that they are monic in the latter, a linear dependence relation would read

  \begin{equation}\label{eq:csa}
    \sum_{\ell}\frac {c_{\ell}}{s+a_{\ell}} = C\in \Bbbk
    ,\quad
    c_{\ell}
    \ \&\
    \left(\text{distinct }a_{\ell}\right)
    \in \Bbbk.
  \end{equation}
  Were $c_1$ (say) non-zero, all terms of \Cref{eq:csa} would have \emph{valuation} \cite[\S VI.3.1, Definition 1]{bourb_comm-alg_en_1972} $\nu\ge 0$ in the valuation ring $\Bbbk[s]_{(s+c_1)}$ (\emph{localization} \cite[\S II.2.2, Remark 3 and \S VI.1.4, Corollary 2]{bourb_comm-alg_en_1972} at the prime ideal $(s+c_1)$), while
  \begin{equation*}
    \nu\left(\frac{c_{1}}{s+a_{1}}\right)=-1.
  \end{equation*}
  An application of $\nu$ thus \cite[\S VI.3.1, Proposition 1]{bourb_comm-alg_en_1972} turns \Cref{eq:csa} into the absurd $-1=0$. 
\end{proof}

\section{Differential-operator symbols as polynomial invariants}\label{se:pol.inv}

The present section is motivated by \cite[Question pre \S 5]{2507.22138v1}. That, in turn, flows out of that paper's material on star transforms and integral/differential operators, but for our purposes the purely representation-theoretic question can undercut some of the PDE-oriented preamble. The setup, briefly, is as follows.

\begin{itemize}[wide]
\item Consider (real, finite-dimensional) representations $\bG\circlearrowright V,W$ of a finite group, of which the latter is a \emph{permutation representation} \cite[\S 1.2(c)]{serre_rep_1977}: one obtained via a morphism $\bG\to S_m$ to the symmetric group on $m$ symbols upon fixing an identification $W\cong \bR^m$. 

\item Consider also a $\bG$-equivariant morphism $V\xrightarrow{U}W$, allowing us to pull back polynomial functions the codomain to those on the domain:
  \begin{equation*}
    S W^*
    \xrightarrow[\quad\text{$\bG$-equivariant}\quad]{\quad U^*:=\circ U\quad}
    S V^*
    ,\quad
    S\bullet:=\text{\emph{symmetric algebra} \cite[\S 11.5, post Proposition 33]{df_3e}}.
  \end{equation*}

  
\end{itemize}

While there is arguably some ambiguity in the question as formulated originally (e.g. ``producing all generators of the invariant ring'' might be amenable to several interpretations), one version of what might have been meant is presumably as follows:

\begin{question}\label{qu:all.perm.rep}
  Is it the case, for fixed $\bG\circlearrowright V$ and varying permutation representations $\bG\circlearrowright W$, that the images of the induced maps
  \begin{equation}\label{eq:fix2fix}
    \left(S W^*\right)^{S_m}
    \xrightarrow[\quad\text{$\bG$-equivariant}\quad]{\quad U^*:=\circ U\quad}
    \left(S V^*\right)^{\bG}
  \end{equation}
  between fixed-point subspaces generate codomain as an algebra?
\end{question}

In working with finite-group representations we do occasionally allow positive-characteristic fields, but always assume the characteristic coprime to the group order. This affords much good behavior: $\bG$ is then \emph{linearly reductive} \cite[\S 1.1, Definition 1.4]{fkm}, we have recourse to \emph{Reynolds operators} (\cite[\S 1.1, Definition 1.5]{fkm}, \cite[p.216]{zbMATH01097431}), the algebra of polynomial invariants is finitely generated \cite[Theorems 1.1 and 1.2]{zbMATH01097431}, etc.

\begin{remark}\label{re:perm.mat.rows}
  To translate the discussion above back into the matrix language employed in \cite{2507.22138v1}, note that choosing bases for $\varphi:\bG\circlearrowright V$ and $\rho:\bG\circlearrowright W$ so that in the latter's case all $\rho_s$, $s\in \bG$ are permutation matrices, the intertwiner $U$ will be expressible as a $\left(\dim W\right)\times \left(\dim V\right)$-matrix satisfying the intertwining condition
  \begin{equation*}
    \rho_s U = U\varphi_s
    ,\quad
    \forall s\in \bG:
  \end{equation*}
  cf. \cite[Definition 3.1]{2507.22138v1}. As in the latter, then, $U$ amounts essentially (insubstantial base-choice issues aside) to selecting a $\bG$-invariant set of vectors in $V^*$ (identifiable with the rows of $U$ regarded as a matrix).

  \Cref{qu:all.perm.rep}, then, amounts to this: is it the case that the algebra $\left(SV^*\right)^{\bG}$ is generated by polynomials of the form
  \begin{equation*}
    p\left(\left(f_i\right)_{i=1}^m\right)
    ,\quad
    p\text{ symmetric in $m$ variables}
  \end{equation*}
  for $\bG$-invariant sets $\left\{f_i\right\}_i$ of linear forms on $V$?
\end{remark}

In this phrasing, \Cref{qu:all.perm.rep} is known to have an affirmative answer (essentially, up to rephrasing), under appropriate conditions on the order $|\bG|$ in relation to the field's characteristic. 

\begin{theorem}\label{th:orbs.suff}
  If the characteristic of the field $\Bbbk$ is either 0 or larger than $|\bG|$ then for any finite-dimensional $\bG$-representation $\bG\circlearrowright V$ over $\Bbbk$ the images of the maps \Cref{eq:fix2fix} generate the $\Bbbk$-algebra $\left(SV^*\right)^{\bG}$.
\end{theorem}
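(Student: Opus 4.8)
The plan is to show that the subalgebra $\cA\subseteq (SV^*)^{\bG}$ generated by the images of all the maps \Cref{eq:fix2fix} is already the whole invariant ring, by exhibiting enough concrete elements of it. The characteristic hypothesis enters in three complementary guises, and the first task is to note how: it makes $\bG$ linearly reductive, so that the Reynolds operator $R=\frac 1{|\bG|}\sum_{s\in \bG}s$ surjects $SV^*\twoheadrightarrow (SV^*)^{\bG}$ and restricts to a surjection $S^kV^*\twoheadrightarrow (S^kV^*)^{\bG}$ on each graded piece; it guarantees Noether's degree bound $\beta(\bG)\le|\bG|$, so that $(SV^*)^{\bG}$ is generated as an algebra by elements of degree $\le |\bG|$; and for $k<\mathrm{char}\,\Bbbk$ (automatic once $k\le|\bG|$) it ensures that $S^kV^*$ is spanned by the pure powers $\left\{\ell^k\ :\ \ell\in V^*\right\}$.

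First I would manufacture the basic members of $\cA$. Each linear form $\ell\in V^*$ determines a $\bG$-equivariant evaluation map $U_\ell\colon V\to \Bbbk^{\bG}$, $v\mapsto \bigl((s\ell)(v)\bigr)_{s\in\bG}$, where $\Bbbk^{\bG}$ carries the regular permutation representation; pulling back the elementary symmetric polynomials along $U_\ell$ exhibits the orbit Chern classes $e_j\bigl((s\ell)_{s\in\bG}\bigr)$ (in the sense of \cite[\S 2]{zbMATH01097431}) as elements of $\cA$. Since the power sums $p_k$ are integer polynomials in the $e_j$ via Newton's identities — which require no hypothesis on the characteristic — the orbit power sums $\sum_{s\in\bG}(s\ell)^k=p_k\bigl((s\ell)_s\bigr)$ also lie in $\cA$, whence so does
\[
  R(\ell^k)=\frac 1{|\bG|}\sum_{s\in\bG}(s\ell)^k\in\cA
  \qquad(\ell\in V^*,\ k\ge 0).
\]

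Next I would promote these to all low-degree invariants. For $k<\mathrm{char}\,\Bbbk$ (in particular for all $k\le|\bG|$, and in characteristic $0$ for every $k$) the space $S^kV^*$ is the linear span of the pure powers $\ell^k$; applying $R$ and using its surjectivity onto invariants shows that $(S^kV^*)^{\bG}=R(S^kV^*)$ is spanned by the elements $R(\ell^k)$, all of which were just placed in $\cA$. Thus every homogeneous invariant of degree $\le|\bG|$ lies in $\cA$, and invoking Noether's bound to say that such elements generate $(SV^*)^{\bG}$ forces $\cA=(SV^*)^{\bG}$ — which, recast through \Cref{re:perm.mat.rows}, is exactly the claimed statement about symmetric functions of $\bG$-invariant sets of linear forms. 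In characteristic $0$ the appeal to Noether's bound is dispensable, since the spanning statement holds in every degree and hence $(S^kV^*)^{\bG}\subseteq\cA$ for all $k$ directly.

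The genuinely load-bearing external input, and the step I would take most care to pin down, is that Noether's degree bound $\beta(\bG)\le|\bG|$ persists in positive characteristic $>|\bG|$ (the non-modular regime), so that it is legitimate to confine attention to degrees $<\mathrm{char}\,\Bbbk$, precisely the range in which pure powers span $S^kV^*$; everything else is bookkeeping with the Reynolds operator and Newton's identities. I would also double-check the mutual compatibility of the three characteristic-driven facts and verify that the orbit maps $U_\ell$ are indeed equivariant for the regular permutation action, so that the orbit symmetric functions genuinely occur as images of \Cref{eq:fix2fix}.
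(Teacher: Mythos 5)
Your argument is correct, and it reaches the same pivotal observation as the paper --- namely that the orbit Chern classes $e_j\bigl((s\ell)_{s\in\bG}\bigr)$ arise as pullbacks of symmetric polynomials along the $\bG$-equivariant orbit maps $V\to\Bbbk^{\bG}$, hence lie in the subalgebra generated by the images of \Cref{eq:fix2fix} --- but it then diverges: the paper simply cites Smith's theorem (\cite[Theorem 2.1]{zbMATH01097431}) that under the stated characteristic hypothesis the orbit Chern classes generate $\left(SV^*\right)^{\bG}$, whereas you essentially reprove that theorem. Your chain (Newton's identities with integer coefficients to pass from the $e_j$ to the orbit power sums, invertibility of $|\bG|$ to recognize $R(\ell^k)$, polarization to see that pure $k^{th}$ powers span $S^kV^*$ for $k<\mathrm{char}\,\Bbbk$, surjectivity of the Reynolds operator onto each $\left(S^kV^*\right)^{\bG}$, and finally Noether's degree bound $\beta(\bG)\le|\bG|$ to confine attention to degrees below the characteristic) is sound, and you are right that the classical Noether bound is exactly the load-bearing citation in positive characteristic and is available precisely in the regime $\mathrm{char}\,\Bbbk=0$ or $\mathrm{char}\,\Bbbk>|\bG|$ hypothesized here. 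What the paper's route buys is brevity and a single clean reference; what yours buys is a self-contained account that isolates exactly where each arithmetic hypothesis is used, shows that the regular permutation representation alone already supplies enough invariants, and correctly notes that in characteristic $0$ the appeal to Noether's bound can be dropped entirely. One small point worth making explicit if you write this up: when the orbit of $\ell$ is not free, the tuple $(s\ell)_{s\in\bG}$ lists each orbit element with multiplicity $|\mathrm{Stab}(\ell)|$, so your symmetric functions are those of the orbit ``with repetition''; this is harmless for your argument (they are still pullbacks along a map to a permutation representation, and the power sums you actually use differ from the orbit power sums only by the invertible factor $|\mathrm{Stab}(\ell)|$), but it is a mild mismatch with Smith's literal definition of orbit Chern classes.
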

\begin{proof}
  Under the hypothesized conditions \cite[Theorem 2.1]{zbMATH01097431} (also \cite[Theorem 3.1.10]{sm_pol-inv}) shows that $\left(SV^*\right)^{\bG}$ is generated as an algebra by what that source refers to as \emph{orbit Chern classes}. These are by definition (\cite[(2.1)]{zbMATH01097431} and subsequent discussion) symmetric polynomials in the elements of $\bG$-orbits in $V^*$, hence the conclusion.
\end{proof}

Note that the common codomain of \Cref{eq:fix2fix} is certainly not the \emph{union} of the maps' images.

\begin{example}\label{ex:z2.minus}
  Over the field $\Bbbk:=\bR$ take for the representation $\bG\circlearrowright V$ of $\bG:=\bZ/2$ the sum of two copies of the non-trivial character. The action on the polynomial ring $SV^*\cong \Bbbk[x,y]$ is by indeterminate negation, and in order to conclude it suffices to observe that the $\bZ/2$-invariant polynomial $xy$ (say) is not expressible as
  \begin{itemize}[wide]
  \item a symmetric polynomial in $m$ variables;

  \item applied evaluated at an $m$-element $\bZ/2$-invariant set of linear forms $\alpha x+\beta y$.
  \end{itemize}
  Indeed, a $\bZ/2$-invariant set of linear forms would have to be a union of ($k$, say) $\pm$ pairs. A quadratic symmetric polynomial $f$ in $2k$ variables, evaluated on such a set, would return
  \begin{equation*}
    f\bigg(
    \left\{\pm \left(\alpha_i x+\beta_i y\right),\ 1\le i\le k\right\}
    \bigg)
    \quad
    \in
    \quad
    \Bbbk \sum_{i=1}^k\left(\alpha_i x+\beta_i y\right)^2.
  \end{equation*}
  The latter line does not contain $xy$ over the reals or indeed, over any \emph{formally real} field, i.e. (\cite[Definition 15.2]{rjw_sq}, \cite[post Proposition 17.4]{lam_1st_2e_2001}) one in which non-trivial sums of squares do not vanish.
\end{example}



\addcontentsline{toc}{section}{References}

\begin{thebibliography}{10}

\bibitem{2507.22138v1}
Gaik Ambartsoumian, Asher Auel, and Mohammad Javad~Latifi Jebelli.
\newblock Symmetric star transforms and the algebraic geometry of their dual
  differential operators, 2025.
\newblock \url{http://arxiv.org/abs/2507.22138v1}.

\bibitem{MR4310170}
Gaik Ambartsoumian and Mohammad~J. Latifi.
\newblock Inversion and symmetries of the star transform.
\newblock {\em J. Geom. Anal.}, 31(11):11270--11291, 2021.

\bibitem{bourb_comm-alg_en_1972}
Nicolas Bourbaki.
\newblock {\em Elements of mathematics. {C}ommutative algebra}.
\newblock Hermann, Paris; Addison-Wesley Publishing Co., Reading, MA, 1972.
\newblock Translated from the French.

\bibitem{df_3e}
David~S. Dummit and Richard~M. Foote.
\newblock {\em Abstract algebra}.
\newblock Chichester: Wiley, 3rd ed. edition, 2004.

\bibitem{3264}
David Eisenbud and Joe Harris.
\newblock {\em 3264 and all that---a second course in algebraic geometry}.
\newblock Cambridge University Press, Cambridge, 2016.

\bibitem{fh_rep-th}
William Fulton and Joe Harris.
\newblock {\em Representation theory. {A} first course}, volume 129 of {\em
  Grad. Texts Math.}
\newblock New York etc.: Springer-Verlag, 1991.

\bibitem{grb_pdo}
Gerd Grubb.
\newblock {\em Distributions and operators}, volume 252 of {\em Grad. Texts
  Math.}
\newblock New York, NY: Springer, 2009.

\bibitem{lam_1st_2e_2001}
T.~Y. Lam.
\newblock {\em A first course in noncommutative rings.}, volume 131 of {\em
  Grad. Texts Math.}
\newblock New York, NY: Springer, 2nd ed. edition, 2001.

\bibitem{ms-cc}
John~W. Milnor and James~D. Stasheff.
\newblock {\em Characteristic classes}.
\newblock Annals of Mathematics Studies, No. 76. Princeton University Press,
  Princeton, N. J.; University of Tokyo Press, Tokyo, 1974.

\bibitem{fkm}
D.~Mumford, J.~Fogarty, and F.~Kirwan.
\newblock {\em Geometric invariant theory}, volume~34 of {\em Ergebnisse der
  Mathematik und ihrer Grenzgebiete (2) [Results in Mathematics and Related
  Areas (2)]}.
\newblock Springer-Verlag, Berlin, third edition, 1994.

\bibitem{nar_real-cplx_2e_1985}
R.~Narasimhan.
\newblock {\em Analysis on real and complex manifolds}, volume~35 of {\em
  North-Holland Mathematical Library}.
\newblock North-Holland Publishing Co., Amsterdam, 1985.
\newblock Reprint of the 1973 edition.

\bibitem{rjw_sq}
A.~R. Rajwade.
\newblock {\em Squares}, volume 171 of {\em London Mathematical Society Lecture
  Note Series}.
\newblock Cambridge University Press, Cambridge, 1993.

\bibitem{serre_rep_1977}
Jean-Pierre Serre.
\newblock {\em Linear representations of finite groups. {Translated} from the
  {French} by {Leonard} {L}. {Scott}}, volume~42 of {\em Grad. Texts Math.}
\newblock Springer, Cham, 1977.

\bibitem{sm_pol-inv}
Larry Smith.
\newblock {\em Polynomial invariants of finite groups}, volume~6 of {\em
  Research Notes in Mathematics}.
\newblock A K Peters, Ltd., Wellesley, MA, 1995.

\bibitem{zbMATH01097431}
Larry Smith.
\newblock Polynomial invariants of finite groups. {A} survey of recent
  developments.
\newblock {\em Bull. Am. Math. Soc., New Ser.}, 34(3):211--250, 1997.

\end{thebibliography}

\def\polhk#1{\setbox0=\hbox{#1}{\ooalign{\hidewidth
  \lower1.5ex\hbox{`}\hidewidth\crcr\unhbox0}}}

\Addresses

\end{document}